\newcommand{\eat}[1]{}
\newtheorem{definition}{Definition}
\newtheorem{remark}{Remark}
\newtheorem{proposition}{Proposition}
\newtheorem{assumption}{Assumption}
\newtheorem{problem}{Problem}
\newcommand{\Rmnum}[1]{\expandafter\@slowromancap\romannumeral #1@}
\title{\LARGE \bf
Experimental Validation of a Real-Time Optimal Controller for Coordination of CAVs in a Multi-Lane Roundabout}
\author{Behdad Chalaki, \emph{IEEE Student Member}, Logan E. Beaver, \emph{IEEE Student Member},\\ Andreas A. Malikopoulos, \emph{IEEE Senior Member}%
\thanks{This research was supported in part by ARPAE’s NEXTCAR program under the award number DE- AR0000796 and by the Delaware Energy Institute (DEI). This support is gratefully acknowledged.}%
\thanks{The authors are with the Department of Mechanical Engineering, University of Delaware, Newark, DE 19716 USA (emails: \texttt{\{bchalaki;lebeaver;andreas\}@udel.edu}).} \thanks{Supplementary information and videos can be found at:
\newline \url{https://sites.google.com/view/ud-ids-lab/mlrb}}}
\begin{document}
\maketitle
\thispagestyle{empty}
\pagestyle{empty}
\begin{abstract}
Roundabouts in conjunction with other traffic scenarios, e.g., intersections, merging roadways, speed reduction zones, can induce congestion in a transportation network due to driver responses to various disturbances. Research efforts have shown that smoothing traffic flow and eliminating stop-and-go driving can both improve fuel efficiency of the vehicles and the throughput of a roundabout. In this paper, we validate an optimal control framework developed earlier in a multi-lane roundabout scenario using the University of Delaware's scaled smart city (UDSSC).
We first provide conditions where the solution is optimal. Then, we demonstrate the feasibility of the solution using experiments at UDSSC, and show that the optimal solution completely eliminates stop-and-go driving while preserving safety.
\end{abstract}

\indent



\section{Introduction} \label{sec:1}
\eat{
With the advent of emerging information and communication technologies, we are witnessing a massive increase in the integration of our energy, transportation, and cyber networks.
These advances, coupled with human factors, are giving rise to a new level of complexity in transportation networks  . 
As we move to increasingly complex emerging transportation systems, with changing landscapes enabled by connectivity and automation, future transportation networks could shift dramatically with the large-scale deployment of connected and automated vehicles (CAVs). 
}
\eat{
In 2015, congestion caused people in urban areas in the US to spend 6.9 billion additional hours on the road and to purchase an extra 3.1 billion gallons of fuel, resulting in a total cost estimated at \$160 billion \cite{Schrank2015}. Traffic accidents have also increased dramatically over the last decades. In 2015, 35,092 people died on US roadways. However, 94\% of serious motor vehicle crashes are due to human error \cite{singh2015critical}.}

\eat{There have been two major approaches that use CAVs to improve both the safety and efficiency of transportation systems.
The first approach is based on connectivity and automation being used to reduce vehicle gaps and form high-density vehicle platoons. The idea of introducing these platoons to transportation networks gained momentum in the 1980s and 1990s \cite{Shladover1991,Rajamani2000} as a method to alleviate congestion.
The second approach is to smooth the flow of traffic to eliminate stop-and-go driving by applying optimal coordination to traffic bottlenecks. For example, Stern et al. \cite{stern2017dissipation} demonstrated how the insertion of a single CAV following a classical control policy into a ring road of human-driven vehicles could diminish stop-and-go waves.}
Roundabouts generally provide better operational and safety characteristics over other types of intersections \cite{flannery1997operational,flannery1998safety,al2003dynamic,sisiopiku2001evaluation,mandavilli2008environmental}. However, the increase of traffic becomes a concern for roundabouts due to their geometry and priority system, even with moderate demands, some roundabouts may quickly reach capacity \cite{liu2013analysis,hummer2014potential,yang2004new}. Moreover, all incoming traffic may experience a significant delay if the circulating flow is heavy. Previous research has focused mainly on enhancing roundabout mobility and safety with improved metering, or traffic signal controls \cite{hummer2014potential,yang2004new,martin2016benefits,xu2016multi,martin2016capacity}. \eat{To investigate the potential of metering signals in improving roundabout operations during rush hour, Hummer et al. \cite{hummer2014potential} tested a metering approach for a signal-lane roundabout model and a two-lane roundabout model with different levels of approaching traffic demand. Martin-Gasulla et al. \cite{martin2016benefits} studied the benefits of metering signals for roundabouts with unbalanced flow patterns. Yang et al. \cite{yang2004new} proposed a traffic-signal control algorithm to eliminate the conflict points and weaving sections for multi-lane roundabouts by introducing a second stop line for left-turn traffic. Xu et al. \cite{xu2016multi} suggested a multi-level control system that combines metering signalization with full actuated control to serve different time periods throughout the day.} Zohdi and Rakha \cite{zohdy2013enhancing} showed that by using a cooperative adaptive cruise controller, they could improve the fuel efficiency and reduce the travel delay in a single lane roundabout compared to traditional roundabouts. 
As we move to increasingly complex emerging transportation systems, with changing landscapes enabled by connectivity and automation, future transportation networks could shift dramatically with the large-scale deployment of connected and automated vehicles (CAVs). 

Several efforts have been reported in the literature towards coordinating CAVs to reduce spatial and temporal speed variation of individual vehicles throughout the network. These variations can be introduced to the system through the environment, such as by breaking events, or due to the structure of the road network, e.g., intersections \cite{Dresner2008,Lee2012,azimi2014stip, Malikopoulos2017,hult2018optimal,zhang2019decentralized,bichiou2018developing} and cooperative merging \cite{Rios-Torres2017,Ntousakis2016aa, Zhao2018}.\eat{, and speed harmonization \cite{Malikopoulos2018c}.} One of the earliest efforts in this direction was proposed by Athans \cite{Athans1969} to efficiently and safely coordinate merging behaviors as a step to avoid congestion.
Since then, several research efforts have been reported in the literature proposing coordination of CAVs in traffic scenarios, such as merging roadways, urban intersections, and speed reduction zones. In earlier work, a decentralized optimal control framework was established for online coordination of CAVs in such traffic scenarios. The analytical solution, without considering state and control constraints, was presented in \cite{Rios-Torres2017, Ntousakis:2016aa} for online coordination of CAVs at merging roadways, a solution for two adjacent intersections was presented in  \cite{mahbub2019energy,chalaki2019a}, and coordination in single-lane roundabouts were investigated in \cite{Malikopoulos2018a}. \eat{The solution of the optimal control problem which considered state and control constraints was presented in \cite{Malikopoulos2017} at an urban intersection without considering a rear-end collision avoidance constraint. The conditions under which the rear-end collision avoidance constraint never becomes active are discussed in \cite{Malikopoulos2018c}.} A thorough review of the state-of-the-art methods and challenges of CAV coordination is provided in \cite{Malikopoulos2016a,guanetti2018control}.

Previously, we have experimentally validated the solution of the unconstrained problem using 10 robotic CAVs at the University of Delaware's scaled smart city (UDSSC) considering a merging roadway scenario \cite{Malikopoulos2018b} and a corridor \cite{Beaver2020DemonstrationCity}.
Other efforts \cite{berntorp2019motion, Hyldmar2019} have demonstrated CAV maneuvers in a scaled environment either by utilizing $2-3$ CAVs \cite{berntorp2019motion} or by focusing on highway driving conditions \cite{Hyldmar2019}.

In this paper, we validate a decentralized optimal control framework developed earlier \cite{Malikopoulos2019b, Malikopoulos2020} in a multi-lane roundabout scenario using UDSSC. Unlike previous work, we guarantee that all safety, state, and control constraints are satisfied by our analytical solution. We demonstrate the feasibility of our framework using experiments in UDSSC. In particular, we implement the solution in real time for $9$ CAVs in a multi-lane roundabout with $3$ areas for the potential lateral collision. 
Finally, we verify that our optimal solution completely eliminates stop-and-go driving while preserving safety.

The remainder of the paper is organized as follows. 
In Section \Rmnum{2}, we introduce our modeling framework. In Section \Rmnum{3}, we provide the analytical solution of the optimal control problem. Then, we present experimental validation and results in Section \Rmnum{4}, and finally, we draw concluding remarks and discuss future work in Section \Rmnum{6}.

\section{Problem Formulation} \label{sec:2}

\subsection{The Roundabout Scenario}
\label{sec:2a}

In this paper, we consider a multi-lane roundabout with three CAV inflows and three areas where lateral collisions between CAVs may occur shown in Fig. \ref{fig:roundabout}.
However, our proposed solution does not depend on the specific paths presented in this work and can be applied in any scenario in a roundabout.
To navigate the roundabout, we define a \emph{control zone}, which starts upstream from the roundabout and ends at each roundabout exit (Fig. \ref{fig:roundabout}). The control zone has an associated \emph{coordinator}, which stores information about the geometry of the roundabout and the trajectory information of each CAV in the control zone. The coordinator does not make any decisions and only acts as a database.
\begin{figure}[ht]
    \centering
    \includegraphics[width=0.65\linewidth]{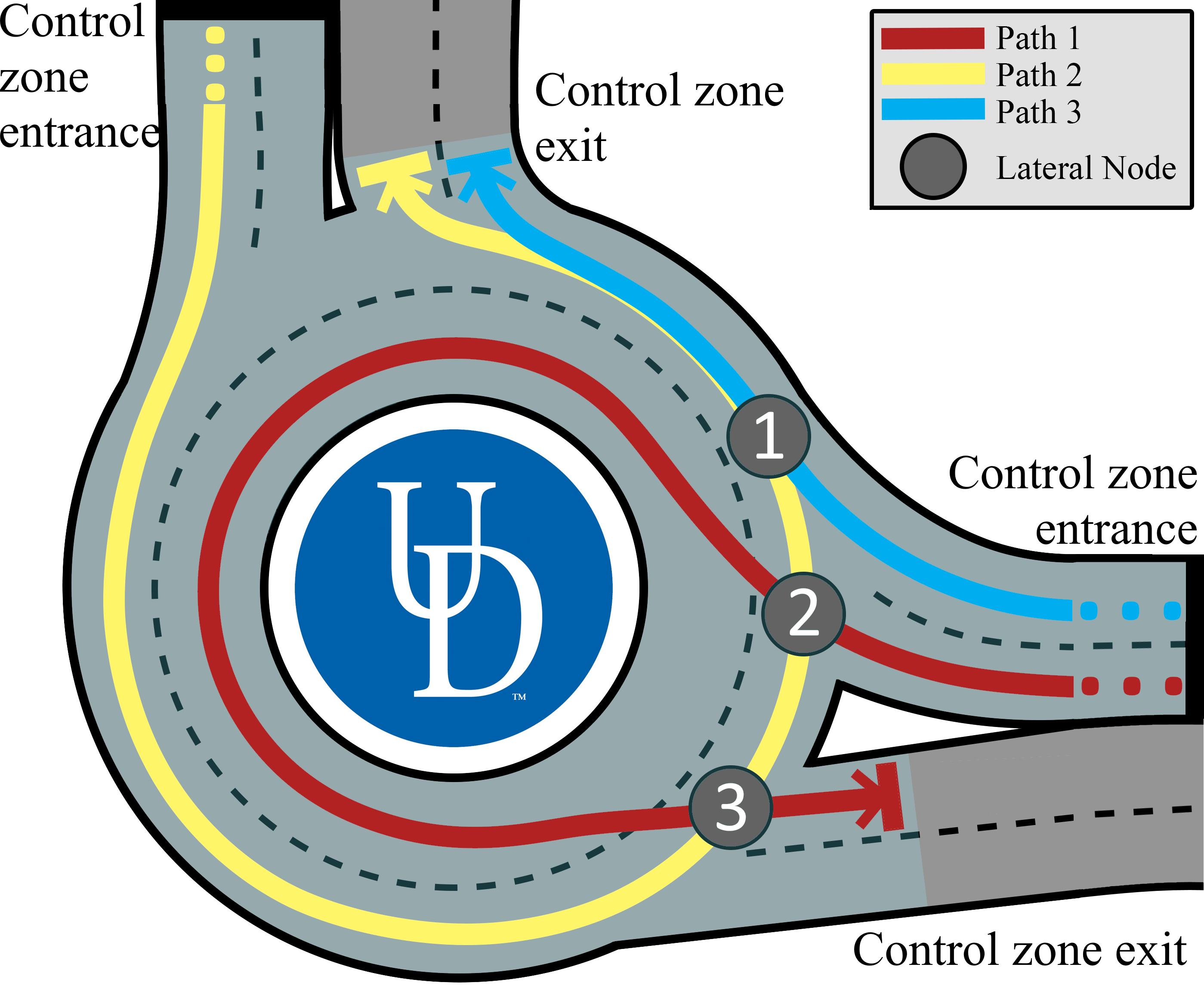}
    \caption{A schematic of the roundabout scenario. The highlighted control zone continues upstream from the roundabout. }
    \label{fig:roundabout}
\end{figure}

Let $\mathcal{Q}(t)\subset\mathbb{N} $ be the set of CAVs at time $t\in\mathbb{R}^{+}$ which are inside the control zone. Upon entering the control zone at time $t_i^0\in\mathbb{R}^{+}$, CAV $i\in\mathcal{Q}(t)$ retrieves the trajectory information of every other CAV $j\in\mathcal{Q}(t) \setminus \{i\}$ and generates an energy-optimal safe trajectory through the control zone. Then, 
CAV $i$ broadcasts its trajectory to the coordinator. Finally, when CAV $i$ exits the control zone at time $t_i^f$, it is removed from the set $\mathcal{Q}(t)$.
A detailed discussion about the communication of the coordinator with the CAVs is presented in \cite{Beaver2020DemonstrationCity}.

\begin{definition}
We define each lateral node with a unique index, $n\in \{1,2,3\}$,  at each area inside the roundabout where there might be a potential lateral collision (Fig. \ref{fig:roundabout}). 
\end{definition}
\begin{definition}
For each CAV $i\in\mathcal{Q}(t)$, we define 
$\mathcal{N}_i$ as the set of all lateral nodes on the path of CAV $i$.
\end{definition}
For instance, if CAV $i$ is traveling along the path $1$ (Fig. \ref{fig:roundabout}), the set of collision nodes is $\mathcal{N}_i =\{2,3\}$.

\begin{definition}
For each CAV $i\in\mathcal{Q}(t)$, upon entering the control zone at time $t_i^0$,  we define the set of lateral nodes shared with each CAV $j\in\mathcal{Q}(t_i^0) \setminus \{i\}$ as,
\begin{gather}\label{1a}
\mathcal{C}_{i,j}=\big\{n ~|\; n\in\:\mathcal{N}_i\cap\mathcal{N}_j \big\}.
\end{gather}
\end{definition}
\subsection{Vehicle Model and Constraints}
\eat{We represent the dynamics of each CAV
$i\in\mathcal{Q}(t)$, 
with the state equation and initial condition
\begin{equation}
\dot{\mathbf{x}}_{i}(t)=f(t,\mathbf{x}_{i}(t),u_{i}(t)),\qquad \mathbf{x}_{i}(t_{i}^{0})=\mathbf{x}_{i}^{0},\label{eq:model}\\
\end{equation}
where $t\in\mathbb{R}^{+}$, and $\mathbf{x}_{i}(t)=\left[p_{i}(t) , v_{i}(t)\right]  ^{T}$, $u_{i}(t)$ are the state and control input of $i$ at time $t$.
Let $t_{i}^{0}$ be the time that $i$
enters the control zone, and $\mathbf{x}_{i}^{0}=\left[p_{i}^{0} , v_{i}^{0} \right] ^{T}$ be the state at this time. 
}
We model dynamics of each CAV $i\in\mathcal{Q}(t)$ as a double integrator 
\begin{equation}%
\begin{split}
\dot{p}_{i} &  =v_{i}(t),\\
\dot{v}_{i} &  =u_{i}(t),
\label{eq:model2}
\end{split}
\end{equation}
where $p_{i}(t)\in\mathcal{P}_{i}$, $v_{i}(t)\in\mathcal{V}_{i}$, and
$u_{i}(t)\in\mathcal{U}_{i}$ denote the position, speed and
acceleration/deceleration (control input) of each CAV $i$ inside the control zone. The sets $\mathcal{P}_{i}$,
$\mathcal{V}_{i}$, and $\mathcal{U}_{i}$
are complete and totally bounded subsets of $\mathbb{R}$.
\eat{
Let
$x_{i}(t)=\left[p_{i}(t) ~ v_{i}(t)\right]  ^{T}$ denote the state of each CAV $i$, with initial value
$x_{i}^{0}=\left[p_{i}^{0} ~ v_{i}^{0} \right]  ^{T}$, where $p_{i}^{0}= p_{i}(t_{i}^{0})=0$ at the entry of the control zone, taking values in $\mathcal{X}_{i}%
=\mathcal{P}_{i}\times\mathcal{V}_{i}$.  The state space 
$\mathcal{X}_{i}$ for each CAV $i$ is
closed with respect to the induced topology on $\mathcal{P}_{i}\times
\mathcal{V}_{i}$ and thus, it is compact.
We need to ensure that for any initial state $(t_i^0, x_i^0)$ and every admissible control $u(t)$, the system \eqref{eq:model} has a unique solution $x(t)$ on some interval $[t_i^0, t_i^f]$, where $t_i^f$ is the time that CAV $i\in\mathcal{N}(t)$ exits the \textit{control zone}. 
The following observations from \eqref{eq:model} satisfy some regularity conditions required both on $f$ and admissible controls $u(t)$ to guarantee local existence and uniqueness of solutions for \eqref{eq:model}: a) The function $f$ is continuous in $u$ and continuously differentiable in the state $x$, b) The first derivative of $f$ in $x$, $f_x$, is continuous in $u$, and c) The admissible control $u(t)$ is continuous with respect to $t$.}

For each CAV $i\in\mathcal{Q}(t)$ the control input and speed at time $t\in\lbrack t_{i}%
^{0},t_{i}^{f}]$ are bounded by
\begin{equation}%
\begin{split}
u_{\min} &  \leq u_{i}(t)\leq u_{\max},\\
0   < v_{\min} & \leq v_{i}(t)\leq v_{\max},
\end{split}
\label{speed_accel constraints}%
\end{equation}
where $u_{\min}$, $u_{\max}$ are the minimum deceleration and maximum
acceleration, and $v_{\min}$, $v_{\max}$ are the minimum and maximum speed limits respectively. \eat{For simplicity, we do not consider CAV diversity. Thus, in the rest of the paper, we set $u_{i,\min}=u_{\min}$ and $u_{i,\max}=u_{\max}.$}
\begin{definition}
For any CAV $i\in\mathcal{Q}(t)$, if there exists a CAV $k\in\mathcal{Q}(t)$ which leads CAV $i$, we define $d_i(t)$ as the bumper-to-bumper distance from CAV $k$ to CAV $i$. If no such CAV $k$ leads CAV $i$, then we let $d_i(t)\to\infty$.
\end{definition}
To 
guarantee no rear-end collision occurs between CAV $i\in\mathcal{Q}(t)$ and the preceding CAV $k\in\mathcal{Q}(t)$, we impose the following rear-end safety constraint,
\begin{equation}\label{eq:rearEndSafety}
  d_i(t)\geq \delta_i(t),
\end{equation}
where $\delta_i(t)$ is the safe distance that depends on CAV's speed,
\begin{equation}
    \delta_i(t) = \gamma + \varphi\, v_i(t),
\end{equation}
where $\gamma,\varphi\in\mathbb{R}$ are the standstill distance and reaction time, respectively.
\eat{
\begin{definition}
 For each CAV $i\in\mathbb{N}$, we define inverse of position as $p_i^{-1}: \mathbb{R}^+\rightarrow\mathcal{P}_{i}. $
\end{definition}}

For each CAV $i\in\mathcal{Q}(t),$ the distance to a node $n\in\mathcal{N}_i$ is denoted by the function $l_i: \mathcal{N}_i\rightarrow\mathcal{P}_{i}$.
To guarantee lateral collision avoidance, we impose the following time headway constraint for every CAV  $i\in\mathcal{Q}(t)$,
\begin{align} \label{eq:lateralSafety}
    &\big|p_i^{-1}(l_i(n)) - p_j^{-1}(l_j(n))\big| \geq t_h,\\
    &\forall\, n \in \mathcal{C}_{i,j}, ~~ \forall j\in\mathcal{Q}(t)\setminus\{i\},\nonumber
\end{align}
where $t_h\in\mathbb{R}^{+}$ is the minimum time headway between any two vehicles entering node $n$.
Note that as position is strictly-increasing for all $t\in[t_i^0, t_i^f]$, the inverse position \eqref{eq:lateralSafety} has a closed-form representation \cite{Malikopoulos2019b,Malikopoulos2020}. 


\begin{remark}
The lateral safety constraint \eqref{eq:lateralSafety} can relax the first-in-first-out coordination policy for CAVs $i,j\in\mathcal{Q}(t)$,
which is common in the literature.
\end{remark}

Next, we formulate a decentralized optimal control problem for each CAV $i\in\mathcal{Q}(t)$ in order to minimize their energy consumption over the interval $t\in[t_i^0, t_i^f]$.
\begin{problem} \label{prb:optimalControl} When a CAV $i\in\mathcal{Q}(t)$ enters the control zone, it solves the following optimal control problem:
\begin{equation}
\label{eq:decentral}
\begin{array}{ll}
\min\limits_{u_i(t)\in \mathcal{U}_i} &\frac{1}{2}\int_{t^0_i}^{t^f_i} u^2_i(t)~dt,\\ 
\emph{subject to}%
:&\eqref{eq:model2},\eqref{speed_accel constraints},\eqref{eq:rearEndSafety},\eqref{eq:lateralSafety},\nonumber\\
\emph{given } &p_{i}(t_{i}^{0}),v_{i}(t_{i}^{0})\text{, } p_{i}(t_{i}^{f}).\nonumber
\end{array}
\end{equation}
\end{problem}

\eat{
To derive the analytical solution of Problem \ref{prb:optimalControl}, we follow the standard methodology used in optimal control problems with 
state and control constraints \cite{bryson1975applied,ross2015primer,Malikopoulos2017}. First, we start with the unconstrained solution to Problem \ref{prb:optimalControl}. If this solution violates any of the state or control constraints, then it is pieced together with the arc corresponding to the violated constraint. This yields a set of algebraic equations that are solved simultaneously using the boundary conditions of Problem \ref{prb:optimalControl} and interior conditions between the arcs. If the resulting solution, which incorporates the optimal switching time between arcs, violates another constraint, then the two arcs are pieced together with a third, which corresponds to the new constraint which has been violated. This yields a larger set of algebraic equations that must be solved simultaneously using the boundary conditions of Problem \ref{prb:optimalControl} and the interior 
conditions between arcs, including the optimal time and state to switch between them. This process is repeated until a piecewise-continuous state trajectory is found, which is a feasible solution for Problem \ref{prb:optimalControl}.
}

To derive the analytical solution to Problem \ref{prb:optimalControl} we may follow the standard methodology used in optimal control problems with state and control constraints \cite{bryson1975applied,ross2015primer,Malikopoulos2017}. This is a recursive process which grows in computational complexity with the number of constraints that may become active in the system.
In general, there is no analytical expression for the solution of Problem \ref{prb:optimalControl} when a safety constrained arc becomes active. Additionally, finding a piecewise-continuous state trajectory that optimally pieces the different arcs together is very computationally challenging, and it may become prohibitively difficult for a CAV to solve the optimal control problem onboard in real time \cite{xiao2019decentralized}.


In our approach, we consider the unconstrained solution to Problem \ref{prb:optimalControl}.
\eat{, i.e., the solution when the state, control, and safety constraints are neglected.}
This has the advantage of guaranteed energy-optimality while being significantly easier to implement on a real vehicle. The optimal unconstrained trajectory in this case can be found by Hamiltonian analysis \cite{Malikopoulos2017}, 
\begin{align}
    p_i(t) =&~ a_i t^3 + b_i t^2 + c_i t + d_i,\label{eq:pUnc}\\
    v_i(t) =&~ 3a_i t^2 + 2 b_i t + c_i,\label{eq:vUnc} \\
    u_i(t) =&~ 6a_i t + 2 b_i,\label{eq:uUnc}
\end{align}
with the boundary conditions
\begin{align}
    p_i(t_i^0) = p_i^0, ~ v_i(t_i^0) = v_i^0, ~
    p_i(t_i^f) = p_i^f, ~ u_i(t_i^f) = 0, \label{eq:optimalBC}
\end{align}
where $u_i(t_i^f) = 0$ results from the velocity being unspecified at $t_i^f$ (the transversality condition). 

To derive an energy-optimal control input that can be computed in real time, we seek to minimize the exit time of CAV $i\in\mathcal{Q}(t)$ from the control zone and impose \eqref{eq:pUnc} - \eqref{eq:uUnc} as an \emph{energy-optimal motion primitive}. This results in a new energy and time-optimal scheduling problem \cite{Malikopoulos2019b}.

\begin{problem} \label{prb:schedulingProblem}
When a CAV $i\in\mathcal{Q}(t)$ enters the control zone, it derives its minimum travel time such that the resulting trajectory is unconstrained and does not violate any state, control, or safety constraints.
\begin{equation}
\begin{array}{ll}
\min\limits_{_{a_i, b_i, c_i, d_i}} &t_i^f, \\\nonumber
\emph{subject to: } &\eqref{speed_accel constraints}, \eqref{eq:rearEndSafety}, \eqref{eq:lateralSafety}, \\\nonumber
&\eqref{eq:pUnc}, \eqref{eq:vUnc}, \eqref{eq:uUnc}, 
\eqref{eq:optimalBC}. \\
\end{array}
\end{equation}
\end{problem}
The solution of Problem \ref{prb:schedulingProblem} yields the minimum $t_i^f$ such that the generated trajectory is the unconstrained optimal solution to Problem \ref{prb:optimalControl}. 
Next, we provide the assumptions we imposed in our approach on each CAV $i\in\mathcal{Q}(t)$.

\begin{assumption}\label{smp:noError}
There are no errors or delays in the vehicle-to-vehicle and vehicle-to-infrastructure communication. 
\end{assumption}

\begin{assumption}\label{smp:tracking}
   Vehicle-level control is handled by a low-level controller which can perfectly track the trajectory generated by solving Problem \ref{prb:schedulingProblem}.
\end{assumption}

The first assumption ensures that we address the deterministic case. It is relatively straightforward to relax this assumption as long as the noise or delays are bounded. The second assumption is to decouple the motion planning and vehicle control, which makes the problem tractable. By tuning the low-level controller, it can be ensured that the prescribed trajectory is followed.

\section{Analytical solution} \label{sec:3}

We seek to transform Problem \ref{prb:schedulingProblem} into an equivalent formulation which can be solved in real time. First, without loss of generality, we consider the domain of Problem \ref{prb:schedulingProblem} to be $t \in [0, t_i^f]$ and $p_i(t)\in[0, S_i]$, where $S_i$ is the length of the control zone corresponding to CAV $i$'s path. This results in a new set of boundary conditions,
\begin{align}
    p_i(t_i^0 = 0) &= 0, &v_i(t_i^0 = 0) &= v_i^0, \label{eq:ICs}\\
    p_i(t_i^f) &= S_i, &u_i(t_i^f) &= 0.  \label{eq:BCs}
\end{align}

Next, we substitute \eqref{eq:ICs} into \eqref{eq:pUnc} and \eqref{eq:vUnc} yielding
\begin{align}
    p_i(t_i^0 = 0) &= d_i = 0, \label{eq:di}\\
    v_i(t_i^0 = 0) &= c_i = v_i^0, \label{eq:ci}
\end{align}
which must always hold for Problem \ref{prb:schedulingProblem}. Next, we substitute \eqref{eq:BCs} into \eqref{eq:uUnc}, which yields,
    $u_i(t_i^f) = 6a_i t_i^f + 2b_i = 0$.
This implies that
\begin{equation} \label{eq:ai}
    a_i = -\frac{b_i}{3t_i^f}.
\end{equation}
Next, we substitute \eqref{eq:di}-\eqref{eq:ai} into \eqref{eq:pUnc} yielding
\begin{equation} \label{eq:pTf}
    p_i(t_i^f) = -\frac{b_i}{3 t_i^f} {t_i^f}^3 + b_i {t_i^f}^2 + v_i^0 t_i^f = S_i.
\end{equation}
Hence, equation \eqref{eq:pTf} simplifies to
\begin{equation} \label{eq:bi}
    b_i = \frac{3(S_i - v_i^0 t_i^f)}{2 {t_i^f}^2}.
\end{equation}

We may further simplify Problem \ref{prb:schedulingProblem} by finding a compact domain of feasible $t_i^f$ by explicitly applying the speed and control constraints. Let $t_{i,\min}^f$ and $t_{i,\max}^f$ denote the lower bound and upper bound on $t_i^f$ respectively, which is imposed by the state and control constraints.

\begin{proposition} \label{prp:lowerBound}
For each CAV $i\in\mathcal{Q}(t)$, the lower bound on exit time of the control zone, $t_{i,\min}^f$, is computed as follows 
 \begin{equation}
     t_{i,\min}^f = \min \{ t_{i,u_{\max}}^f  , t_{i,v_{\max}}^f  \},
 \end{equation}
 where 
 \begin{equation}
    t_{i,u_{\max}}^f = \frac{\sqrt{9 v_0^2 + 12 S_i u_{\max}} - 3 v_0}{2 u_{\max}},
\end{equation}
\begin{equation}
     t_{i,v_{\max}}^f = \frac{3 S_i}{v_i^0 + 2 v_{i,\max}}.
 \end{equation}
\end{proposition}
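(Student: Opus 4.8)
The plan is to collapse the feasible cubic trajectory into a one-parameter family indexed by the free exit time $t_i^f$, and then identify the two ways in which shrinking $t_i^f$ first forces a constraint to become active. By \eqref{eq:di}--\eqref{eq:bi} every coefficient is already determined by $t_i^f$, so it suffices to study the control and speed profiles as functions of $t_i^f$ alone. First I would substitute $a_i=-b_i/(3t_i^f)$ into \eqref{eq:uUnc} and \eqref{eq:vUnc} to get the compact forms $u_i(t)=2b_i\,(1-t/t_i^f)$ and $v_i(t)=v_i^0+b_i\,t\,(2-t/t_i^f)$. In the fast regime of interest one has $S_i>v_i^0 t_i^f$, hence $b_i>0$ by \eqref{eq:bi}; this sign fact is what makes the argument go through, since it shows $u_i$ is strictly decreasing and $v_i$ strictly increasing on $[0,t_i^f]$.

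Given this monotonicity, the control attains its maximum at the entry $t=0$ and the speed attains its maximum at the exit $t=t_i^f$, so only two scalar inequalities can ever be binding. For the control bound I would impose $u_i(0)=2b_i\le u_{\max}$, substitute \eqref{eq:bi}, and clear denominators to reach the quadratic inequality $u_{\max}\,{t_i^f}^2+3v_i^0\,t_i^f-3S_i\ge 0$. Since the leading coefficient is positive and the product of the roots equals $-3S_i/u_{\max}<0$, exactly one root is positive; solving and retaining that root yields the lower bound $t_{i,u_{\max}}^f$ stated in the proposition.

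For the speed bound I would impose $v_i(t_i^f)=b_i t_i^f+v_i^0\le v_{\max}$, again substitute \eqref{eq:bi}, and simplify. Because $b_i\propto {t_i^f}^{-2}$, the factor $t_i^f$ and one cleared power of $t_i^f$ reduce the inequality to the linear relation $3S_i\le (v_i^0+2v_{\max})\,t_i^f$, which rearranges directly to $t_i^f\ge t_{i,v_{\max}}^f$. Because both the control and speed constraints must hold at once, the admissible exit times are the intersection of the half-lines $\{t_i^f\ge t_{i,u_{\max}}^f\}$ and $\{t_i^f\ge t_{i,v_{\max}}^f\}$, and the smallest feasible exit time is dictated by whichever constraint is active; this is the combination recorded in Proposition~\ref{prp:lowerBound}.

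The two substitutions and the quadratic solve are routine. The step needing genuine care is the structural claim underpinning everything: that $b_i>0$ in the regime of interest, so that the extrema of $u_i$ and $v_i$ sit at the endpoints rather than in the interior. I would verify this by checking that the vertex of the speed parabola lies exactly at $t=t_i^f$ (equivalently, $\dot v_i=u_i$ vanishes only there) and that $u_i$ is affine with a single-signed slope, which together rule out any interior maximum and confirm that no other evaluation point can tighten either bound.
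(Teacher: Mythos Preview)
Your approach is essentially the same as the paper's: reduce everything to the single free parameter $t_i^f$, recognize that the binding instances of \eqref{speed_accel constraints} are $u_i(0)=u_{\max}$ and $v_i(t_i^f)=v_{\max}$, and solve each for the corresponding threshold on $t_i^f$. You actually supply more justification than the paper does, since you derive the closed forms $u_i(t)=2b_i(1-t/t_i^f)$ and $v_i(t)=v_i^0+b_i t(2-t/t_i^f)$ and use the sign of $b_i$ to argue that the extrema sit at the endpoints; the paper simply asserts the two cases.

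There is, however, a genuine inconsistency at your last step. You correctly show that feasibility is equivalent to the pair of inequalities
\[
t_i^f \ge t_{i,u_{\max}}^f \quad\text{and}\quad t_i^f \ge t_{i,v_{\max}}^f,
\]
and you describe the admissible set as the intersection of the two half-lines. But the infimum of $[t_{i,u_{\max}}^f,\infty)\cap[t_{i,v_{\max}}^f,\infty)$ is $\max\{t_{i,u_{\max}}^f,\,t_{i,v_{\max}}^f\}$, not $\min$. Your own argument therefore proves $t_{i,\min}^f=\max\{t_{i,u_{\max}}^f,\,t_{i,v_{\max}}^f\}$, which contradicts the formula you are trying to establish. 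The phrase ``dictated by whichever constraint is active'' papers over this: as $t_i^f$ shrinks, the constraint that activates \emph{first} is the one with the larger threshold, so it is the larger of the two numbers that bounds the feasible set from below. The paper's own proof exhibits the same gap (it derives both thresholds and then writes $\min$ without justification), so you should either flag the $\min$ in the statement as a likely typo for $\max$, or supply an argument explaining why the smaller threshold alone is the intended bound.
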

\begin{proof}
There are two cases to consider:
\textbf{Case $1$:} CAV $i$ achieves its maximum control input at entry of the control zone, as $u_i(t_i^0)=u_{\max}$. \textbf{Case $2$:} CAV $i$ achieves its maximum speed at the end of control zone, as $v_i(t)$ is strictly increasing $v_i(t_i^f)=v_{\max}$.

In case $1$, by \eqref{eq:uUnc}, we have 
\begin{equation}\label{eq:tfumax}
    u_i(t_i^0 = 0) = 2b_i = u_{\max} .
\end{equation}
Substituting \eqref{eq:bi} into \eqref{eq:tfumax} and solving for $t_i^f$, yields the quadratic equation
\begin{equation}\label{eq:tfumax2}
    u_{\max}{t_i^f}^2 + 3v_i^0 t_i^f - 3S_i = 0,
\end{equation}
which has two real roots with opposite signs, as $t_{i,1}^f t_{i,2}^f = \frac{-3 S_i}{u_{\max}}<0$.
Thus, $t_{i,u_{\max}}^f>0$ is computed by
\begin{equation}
    t_{i,u_{\max}}^f = \frac{\sqrt{9 v_0^2 + 12 S_i u_{\max}} - 3 v_0}{2 u_{\max}}.
\end{equation}

For case $2$, by \eqref{eq:vUnc}, we have
\begin{equation} \label{eq:vminPoly}
    v_i(t_i^f) = \frac{a_i}{3} {t_i^f}^2 + \frac{b_i}{2} t_i^f + v_i^0 = v_{\max}.
\end{equation}
Substituting \eqref{eq:ai} and \eqref{eq:bi} into \ref{eq:vminPoly} yields
\begin{align}
     v_i(t_i^f) &= 3 \Big(\frac{-b_i}{3 t_i^f}\Big) {t_i^f}^2 + 2 b_i t_i^f + v_0 \\
     &= b_i t_i^f + v_0 = \frac{3(S_i - v_i^0 t_i^f)}{2 t_i^f} + v_0 = v_{\max},\nonumber
\end{align}
which simplifies to
\begin{equation}
     t_{i,v_{\max}}^f = \frac{3 S_i}{v_i^0 + 2 v_{\max}} .
 \end{equation}
 Thus, our lower bound on $t_i^f$ is given by
 \begin{equation}
     t_{i,\min}^f = \min \{ t_{i,u_{\max}}^f  , t_{i,v_{\max}}^f  \}.
 \end{equation}
\end{proof}

\begin{proposition} \label{prp:upperBound}
For each CAV $i\in\mathcal{Q}(t)$, the upper bound on exit time of the control zone, $t_{i,\max}^f$, is computed as follows 
\begin{equation}
        t_{i,\max}^f = 
        \begin{cases}
            t_{i,v_{\min}},&\text{ if }\ 9 {v_i^0}^2 + 12 S_i u_{i,\min} < 0,\\
            \max \{ t_{i,u_{\min}}^f  , t_{i,v_{\min}}^f\} ,&\text{otherwise.}\ \\
        \end{cases}
\end{equation}
 where 
\begin{align}
        t_{i,v_{\min}} = \frac{3 S_i}{v_i^0 + 2 v_{\min} },
        t_{i,u_{\min}} = \frac{\sqrt{9 v_0^2 + 12 S_i u_{\min}} -3 v_0 }{2 u_{\min} }.
\end{align}
\end{proposition}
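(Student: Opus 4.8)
The plan is to mirror the structure of the proof of Proposition~\ref{prp:lowerBound}, exchanging the ``fast'' constraints for the ``slow'' ones: the upper bound on $t_i^f$ is dictated by the maximum deceleration $u_{\min}$ attained at the entry of the control zone and by the minimum speed $v_{\min}$ attained at its exit. I would again split into two cases and take the more restrictive (here, the larger) of the two resulting times.

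\emph{Case $1$ (maximum deceleration at entry).} Since $u_i(t)$ in \eqref{eq:uUnc} is affine with $u_i(t_i^f)=0$, in the decelerating regime ($b_i<0$, hence $a_i>0$ by \eqref{eq:ai}) it is increasing, so its minimum over $[0,t_i^f]$ occurs at $t=0$. Imposing $u_i(0)=2b_i=u_{\min}$ and substituting \eqref{eq:bi} yields the quadratic $u_{\min}{t_i^f}^2+3v_i^0 t_i^f-3S_i=0$ with discriminant $\Delta=9{v_i^0}^2+12S_i u_{\min}$. This is the step where the sign of $u_{\min}$ changes everything relative to Proposition~\ref{prp:lowerBound}: there $u_{\max}>0$ forced the product of the roots to be negative and hence guaranteed a unique positive root, whereas here $u_{\min}<0$ makes the product $-3S_i/u_{\min}>0$ and the sum $-3v_i^0/u_{\min}>0$, so the two roots share the same (positive) sign and may fail to be real. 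I would therefore argue that when $\Delta<0$ the entry deceleration can never reach $u_{\min}$, so this constraint produces no finite upper bound and only Case $2$ is active; and that when $\Delta\ge 0$ the relevant candidate is the smaller positive root $t_{i,u_{\min}}^f=\big(\sqrt{\Delta}-3v_0\big)/(2u_{\min})$, which is positive because $12 S_i u_{\min}<0$ forces $\sqrt{\Delta}<3v_0$ while the denominator is negative.

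\emph{Case $2$ (minimum speed at exit).} Because in the decelerating regime $v_i(t)$ in \eqref{eq:vUnc} is convex with its vertex located at $t=t_i^f$, the minimum speed is attained exactly at the exit. Using the simplification $v_i(t_i^f)=b_i t_i^f+v_i^0$ already recorded in the proof of Proposition~\ref{prp:lowerBound} and setting it equal to $v_{\min}$ gives, after clearing $t_i^f$, the bound $t_{i,v_{\min}}=3S_i/(v_i^0+2v_{\min})$. Combining the two cases then produces $t_{i,\max}^f=t_{i,v_{\min}}$ when $\Delta<0$ and $t_{i,\max}^f=\max\{t_{i,u_{\min}}^f,t_{i,v_{\min}}^f\}$ otherwise, as claimed.

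The main obstacle I anticipate is not the algebra but the sign/discriminant bookkeeping of Case $1$ together with the justification that the $\max$ of the two candidate times is the genuine feasible upper bound. Concretely, one must examine the entry acceleration $u_i(0)=3(S_i-v_i^0 t_i^f)/{t_i^f}^2$ as a function of $t_i^f$, observe that its minimum value $-3{v_i^0}^2/(4S_i)$ coincides with $u_{\min}$ precisely at $\Delta=0$, and use this monotonicity to show that the deceleration constraint first becomes active at $t_{i,u_{\min}}^f$ and that beyond $\max\{t_{i,u_{\min}}^f,t_{i,v_{\min}}^f\}$ at least one of the two constraints is violated, so that no larger feasible exit time exists.
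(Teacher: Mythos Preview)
Your proposal follows essentially the same approach as the paper: the paper's own proof merely states that one takes ``similar steps to Proposition~\ref{prp:lowerBound}'' together with the observation that no real root exists when the discriminant $9{v_i^0}^2+12S_iu_{\min}$ is negative, and that is exactly the two-case structure you lay out. Your added care about the sign analysis, the selection of the smaller positive root, and the monotonicity of $u_i(0)$ as a function of $t_i^f$ actually goes beyond the level of detail the paper provides.
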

\begin{proof}
Similar steps to Proposition \ref{prp:lowerBound} can be followed to find the upper bound for $t_{i}^f$. 
Note that when $9 v_0^2 + 12 S_i u_{\min} < 0$ there is no real value of $t_i^f$ which satisfies all of the boundary conditions simultaneously. 
In this case the lower bound is given by the $v_{\min}$ case.
The proof for the $v_{\min}$ case is identical to Proposition \ref{prp:upperBound} and is omitted. 
\end{proof}{}

Finally, we may write an equivalent formulation of Problem \ref{prb:schedulingProblem}, which optimizes a single variable, $t_i^f$ over a compact set $[t_{i,\min}^f,t_{i,\max}^f]$.

\begin{problem} \label{prb:final}
When CAV $i\in\mathcal{Q}(t)$ enters the control zone it derives the minimum exit time such that the resulting unconstrained trajectory does not violate any safety constraints.
\begin{equation}
\begin{array}{ll}
&\min\limits_{t_i^f} t_i^f, \\\nonumber
\emph{subject to: } &\eqref{eq:rearEndSafety}, \eqref{eq:lateralSafety}, \\\nonumber
& \eqref{eq:di}, \eqref{eq:ci}, \eqref{eq:ai}, \eqref{eq:bi},\\\nonumber
&t_i^f \in [t_{i,\min}^f, t_{i,\max}^f].
\end{array}
\end{equation}
\end{problem}

\eat{\textcolor{blue}{Problem \ref{prb:final} is formulated for each path and is only dependent on path length (is independent of road geometry). It may be formulated with an arbitrary number of nodes in \eqref{eq:lateralSafety}.}}
Problem \ref{prb:final} can then be numerically solved in real time by each CAV $i\in\mathcal{Q}(t)$ upon entering the control zone.
In the next section, we describe our experimental testbed and the implementation of the proposed approach.

\section{Experimental validation and results}
To experimentally validate our method, experiments were carried out in UDSSC, 
using nine scaled CAVs traveling along three different conflicting paths (Fig. \ref{fig:roundabout}). 
UDSSC is a 1:25 scale testbed designed to replicate real-world traffic scenarios and test cutting-edge control technologies in a safe and scaled environment (see \cite{Beaver2020DemonstrationCity} for more details).
\eat{
UDSSC is a fully integrated smart city, which can be used to validate the efficiency of control and learning algorithms and their applicability in hardware. It utilizes high-end computers, a VICON motion capture system, and a fleet of scaled CAVs (Fig. \ref{fig:ScaledCAV}) to simulate a variety of centralized and decentralized control strategies. Each CAV has a Raspberry Pi 3B with a $1.2$ GHz quad-core ARM processor and communicates with the \textit{mainframe} computer (Processor: Intel Core $i7-6950X$ CPU @ $3.00$ GHz x $20$, Memory: $125.8$ Gb). UDSSC has been used successfully for coordination of CAVs \cite{Malikopoulos2018b, Beaver2020DemonstrationCity} and implementation of reinforcement learning policies \cite{jang2019simulation, chalaki2019zero}.

\begin{figure}[ht]
    \centering
    \includegraphics[width=0.9\linewidth]{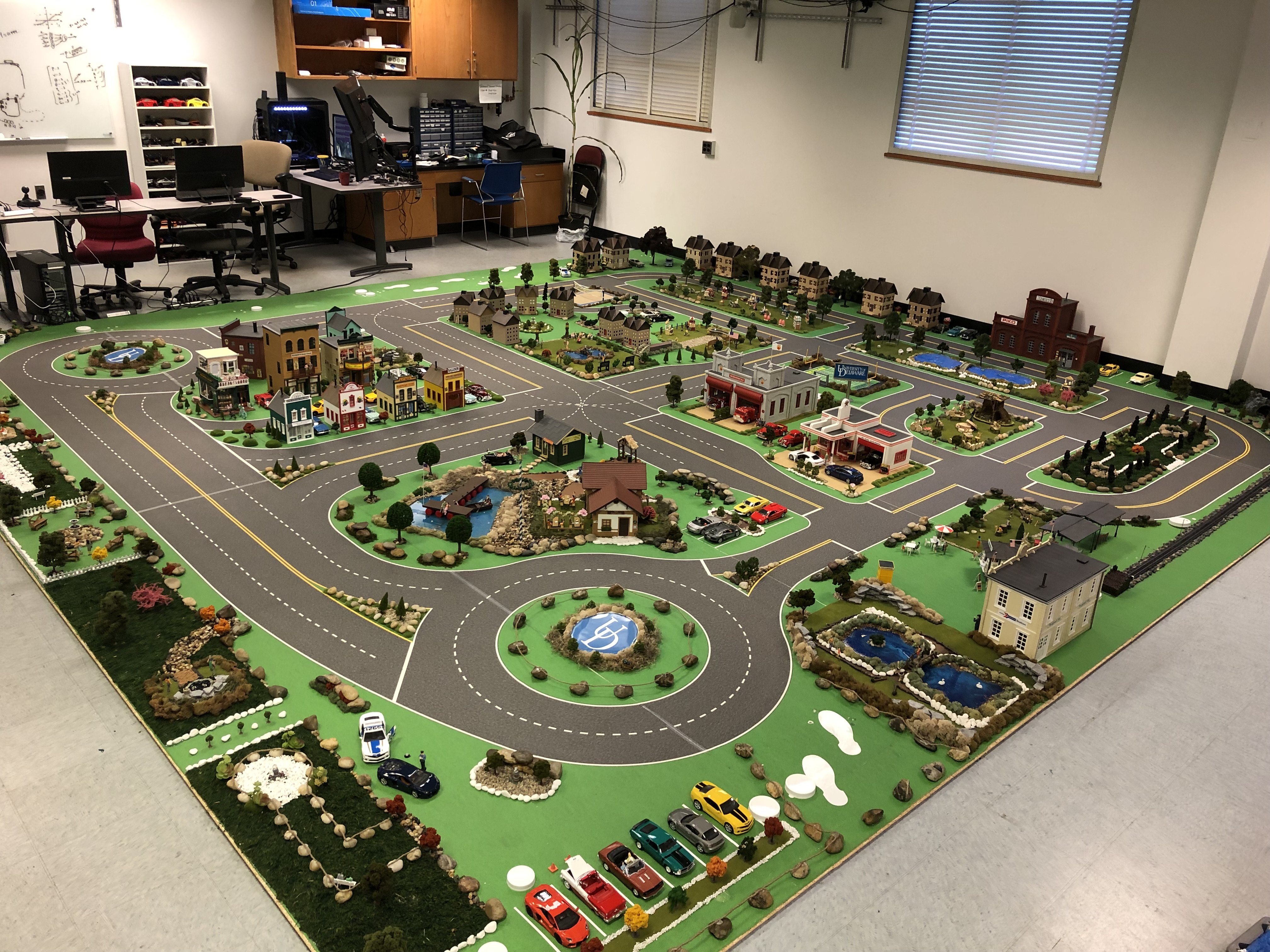}
    \caption{The University of Delaware's scaled smart city. The roundabout used in this experiment is in the upper-left side of the image.}
    \label{fig:udssc}
\end{figure}

\begin{figure}[ht]
    \centering
    \includegraphics[width=0.7\linewidth]{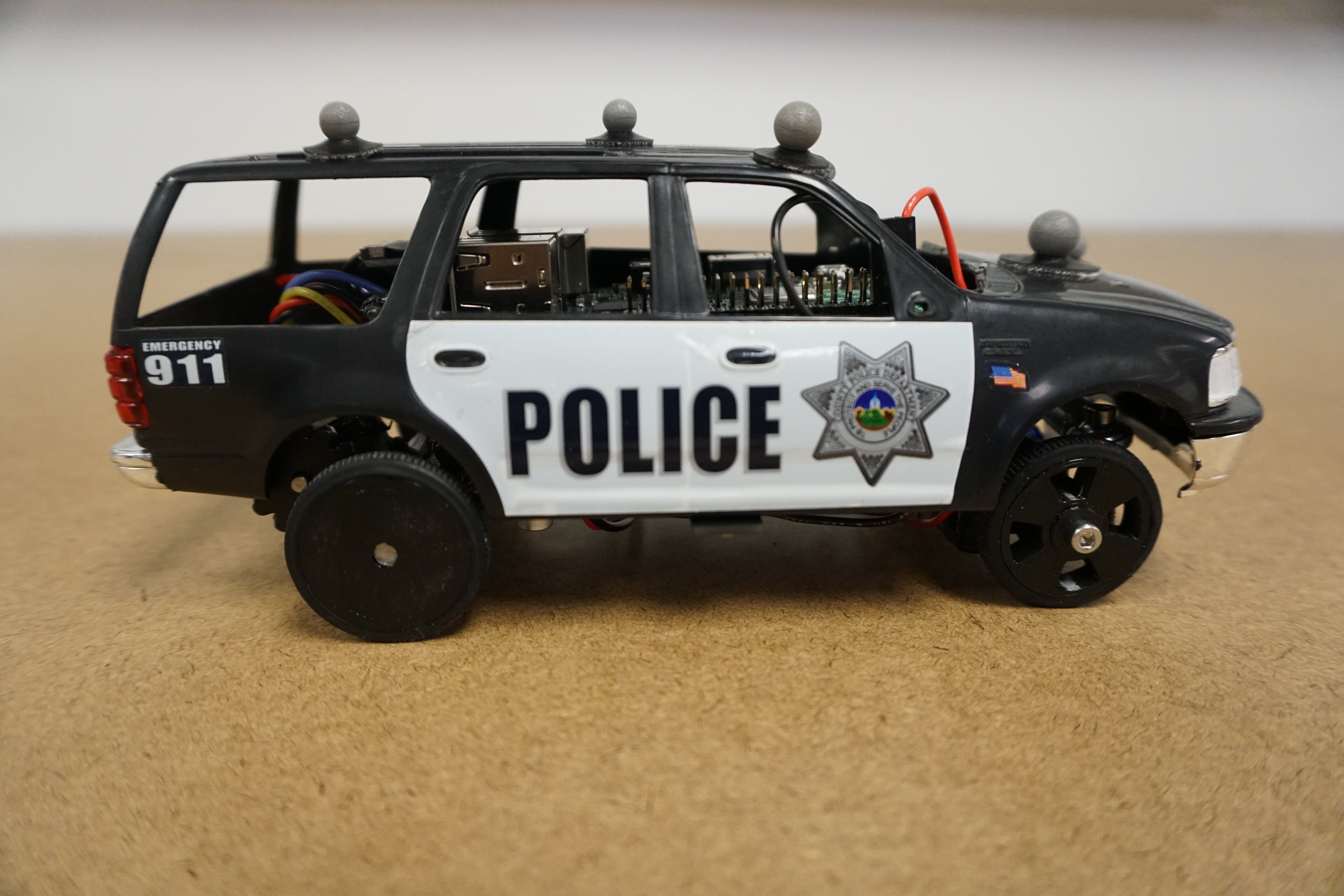}
    \caption{A picture of the connected and automated vehicles in University of Delaware's scaled smart city.}
    \label{fig:ScaledCAV}
\end{figure}

High-level routing is achieved by a multithreaded C++ application which runs on the mainframe computer. The CAV receives its position, speed profile, and the path information from the mainframe and uses a modified Stanley \cite{Thrun2007Stanley:Challenge} controller to handle lane tracking. Meanwhile, a feed-forward-feedback PID controller \cite{Spong2004RobotEdition} tracks the desired speed profile. Medium and low-level control is accomplished on-board each CAV in a purely distributed manner. For further details about UDSSC's hardware see \cite{Beaver2020DemonstrationCity}.
\eat{Using information from the mainframe, each CAV updates at $50$ Hz to calculate a lateral, heading, and distance error. The lateral and heading errors are then passed to the Stanley controller to calculate an output steering angle. Meanwhile, the position error and desired speed are used in a feed-forward-feedback controller to calculate the desired motor speed. The desired speed and steering angle are then passed to the Zumo board with an on-board  Atmega  32U4  microcontroller, which runs a low-level PID controller to precisely control the Gearmotor and steering servo. The Zumo is also equipped with the IMU and compass sensor that can be used for future experiments.} 

Each CAV starts their path outside the control zone and applies the Intelligent Driver Model (IDM) \cite{Treiber2000}. The acceleration command given to  a CAV $i\in\mathcal{Q}(t)$ following CAV $k\in\mathcal{Q}(t)$ is
\begin{equation*}\label{eq:idm}
    u_i = a_{\max} \Bigg(1 - \Big(\frac{v_i}{v_{\max}}\Big)^{\delta} - \frac{s_0 + v_i t_h + \big(v_i  (v_i-v_k)\big)}{2 \sqrt{a_{\max}v_{\max}} d_{i}}^2 \Bigg)
\end{equation*}    
where $\delta$ is the acceleration exponent, $t_h$ is the desired time headway, and $s_0$ is the desired standstill distance.
}
Upon entering the control zone, CAV $i\in\mathcal{Q}(t)$ determines its trajectory by solving Problem \ref{prb:final} numerically. CAV $i$, then, follows this fixed trajectory until it exits the control zone.


During the experiment we used the following parameters in Problem \ref{prb:final}: $v_{\max} = 0.15$ m/s, $v_{\min} = 0.05$ m/s, $u_{\max} = 0.45$ m/s$^2$, $u_{\min} = -0.45$ m/s$^2$, and $t_h = 1.0$ s.
We repeated the experiment five times to collect multiple data sets and to give an estimate on the noise and disturbances present in the system. The CAV inflows were explicitly configured to lead to lateral collisions in the uncontrolled case. Next, we present and discuss the results of these experiments. 


To validate our proposed controller in UDSSC, several pieces of data were collected throughout the five experiments. First, the position, speed\eat{, location} within the UDSSC, and a timestamp for each CAV was streamed back to the mainframe at a rate of $20$ Hz. Furthermore, the state and time of each CAV entering the control zone were recorded, as well as the computed and achieved exit time. These results are summarized in Table \ref{tab:results}. Note that the minimum speed of any
CAVs at $1:25$ scaled testbed across all five experiments is $0.12$ m/s  ($7$ mph at full scale), which demonstrates that stop and go driving has been completely eliminated. Additionally, the average CAV speed is $0.42$ m/s ($24$ mph at full scale), which implies that most CAVs are traveling near $v_{\max}$ and must apply minimal control effort.

\begin{table}[ht]
    \caption{Average velocity and travel time results for the 5 experiments. RMSE is normalized by travel time for each CAV.}
    \centering
    \begin{tabular}{c|ccc}
        Experiment & $v_{\min}$ [m/s] & $v_{\text{avg}}$ [m/s] & Travel Time RMSE \\
        \toprule
        1 & 0.16 & 0.41 & 2.71\,\% \\
        2 & 0.27 & 0.45 & 1.54\,\% \\
        3 & 0.18 & 0.41 & 4.03\,\% \\
        4 & 0.12 & 0.43 & 1.92\,\% \\
        5 & 0.21 & 0.42 & 1.38\,\%
    \end{tabular}
    \label{tab:results}
\end{table}

The exit time data for each CAV is visualized in Fig. \ref{fig:tifResult}, where the grey bars represent the feasible space of $t_i^f$, the wide black bars correspond with the solution of Problem \ref{prb:final}, and the thin red bars show the achieved exit time for each CAV. From Table \ref{tab:results}, the error between desired and actual exit time varies between $2-4$\%. This error comes from the CAV's ability to track the desired trajectory and shows that Assumption \ref{smp:tracking} is reasonable for well-tuned CAVs in UDSSC.

\begin{figure}[ht]
    \centering
    \includegraphics[width=0.9\linewidth]{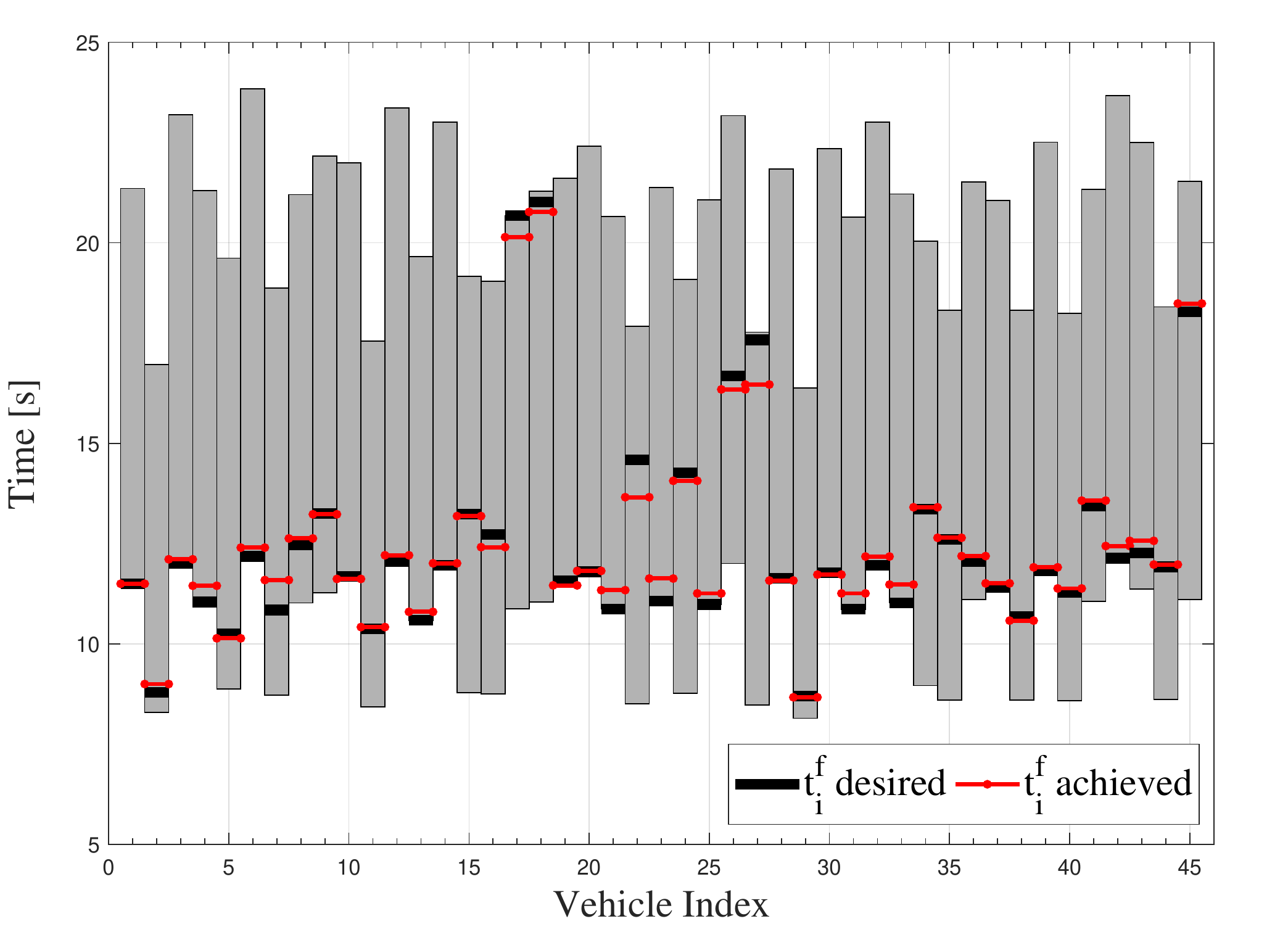}
    \caption{Estimated and actual arrival time for each vehicle over all experiments.} 
    \label{fig:tifResult}
\end{figure}

The position trajectory of an ego-CAV following path $2$ is given in Fig. \ref{fig:trajectory}. The ego-CAV's position is denoted by the dashed red line, while the positions of two other CAVs are represented by dotted black lines. The lateral collision constraints are denoted by vertical black bars, and the rear-end safety constraint is the hashed region on the graph. There are two other CAVs shown; one is on Path 3 and merges in front of the ego-CAV at collision node $1$ (Fig. \ref{fig:roundabout}) and the second CAV leads the ego-CAV on path $2$.

Figure \ref{fig:trajectory} demonstrates that, in reality, Assumption \ref{smp:noError} is too strong. The trajectory generated by the ego-vehicle may not violate any constraints, but the actual trajectory violates the rear-end safety constraint by a car length ($0.2$ m). However, at this speed, the rear-end safety constraint requires a three-car length gap, so a robust control formulation of Problem \ref{prb:final} could likely guarantee collision avoidance. This can also be seen in the lateral collision avoidance constraint in Fig. \ref{fig:trajectory}, where a later CAV crosses node $3$ in a way that violates the time headway constraint (again, without leading to an actual collision).

\begin{figure}[ht]
    \centering
    \includegraphics[width=0.9\linewidth]{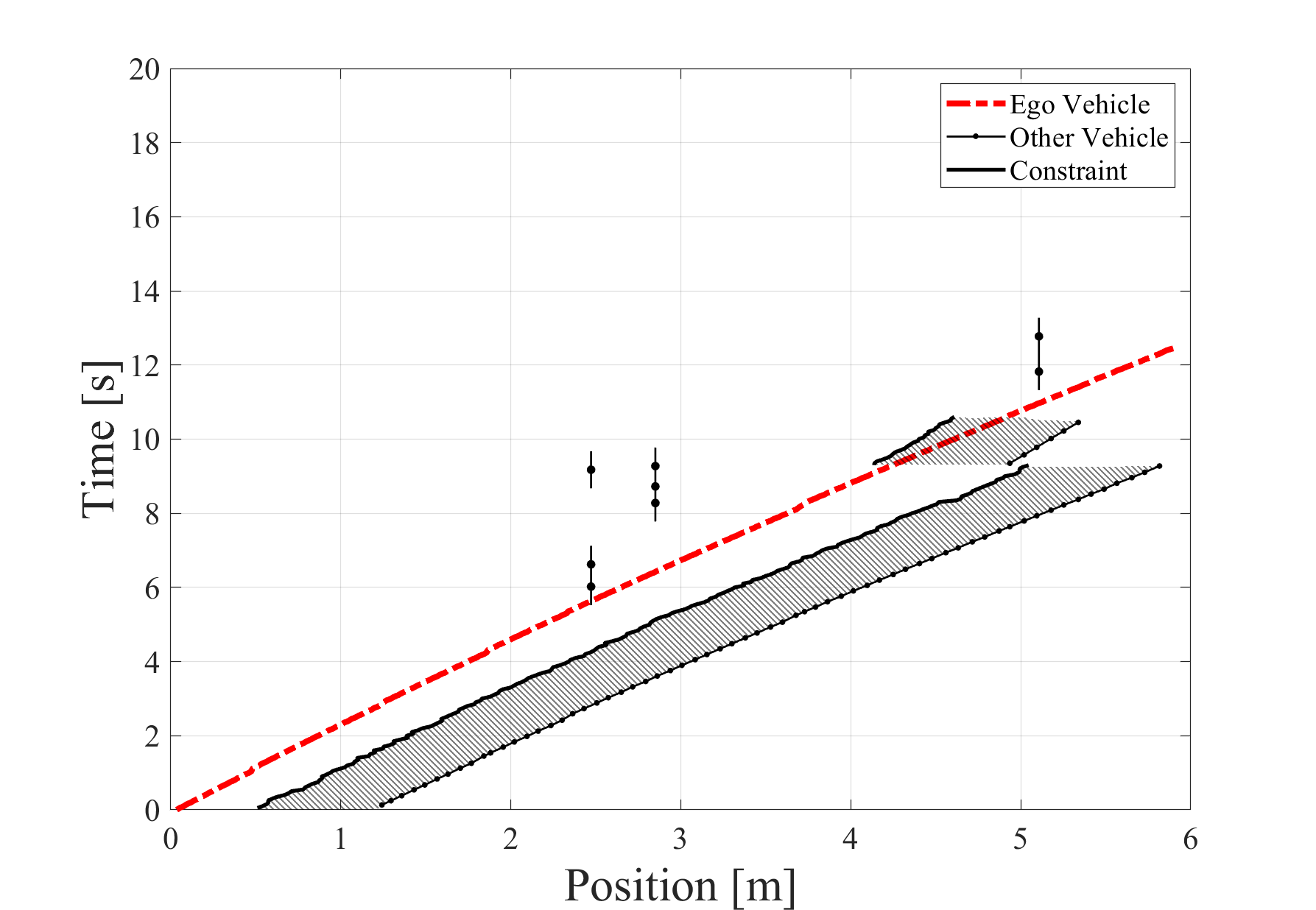}
    \caption{Position trajectory for the third vehicle entering from path $2$ in the $5$th experiment. The lateral constraints are shown as vertical lines, and the rear-end safety constraint is the hashed region.}
    \label{fig:trajectory}
\end{figure}

Finally, the average, maximum, and minimum speed for each CAV across all experiments are given in Fig. \ref{fig:trajectoryBlock}. Each figure corresponds to a single path (see Fig. \ref{fig:roundabout}) and is taken over $15$ CAV ($3$ CAVs per path over five experiments). The CAVs' positions are taken directly from VICON and numerically derived using a first-order method.

From Fig. \ref{fig:trajectoryBlock}, the average speed for CAVs on each path is very close to constant. Path $1$ shows the most variance, which is due to the distance between collision nodes $2$ and $3$ on path $1$ (see Fig. \ref{fig:roundabout}). In order for a CAV $i\in\mathcal{Q}(t)$ which is traveling along path $1$ to reduce its arrival time at node $2$, it must make a proportionally larger reduction in the value of $t_i^f$. This is a side effect of enforcing the unconstrained trajectory on each CAV over the entire control zone.
Additionally, the entrance to the control zone along path $3$ follows a sharp right turn. This results in a relatively lower average trajectories in Fig. \ref{fig:trajectoryBlock}\subref{c}, as the dynamics of the CAVs reduce their speed while rounding these turns, causing them to enter the control zone at a lower speed.
Finally, there are instances in Fig. \ref{fig:trajectoryBlock}\subref{b} where the maximum vehicle speed surpasses the speed limit. This is a result of stochasticity in the vehicle dynamics and sensing equipment, as well as environmental disturbances, on our deterministic controller.

\eat{
\begin{figure}[ht]
    \centering
    \includegraphics[width=0.99\linewidth]{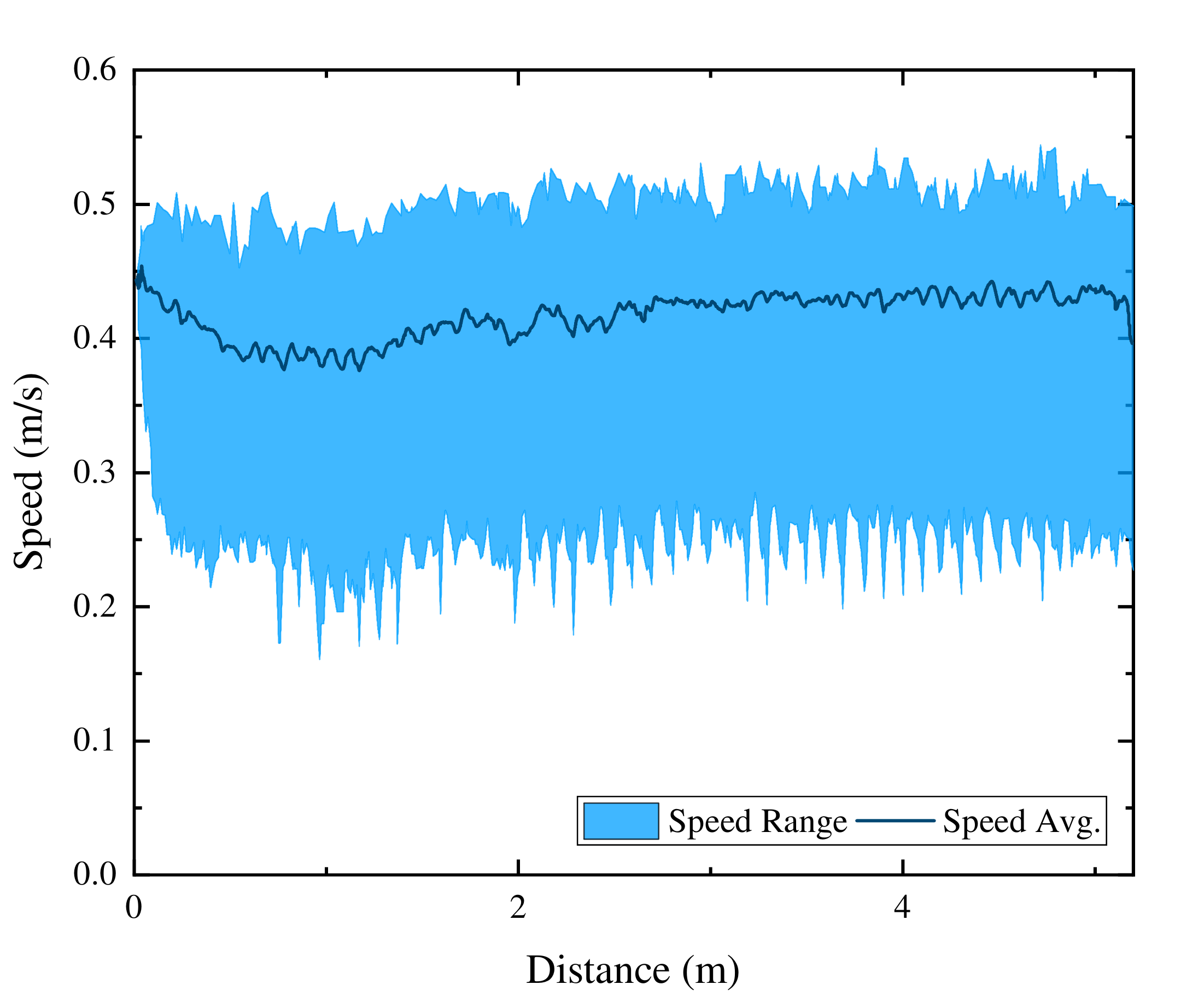}
    \caption{ Maximum, minimum, and average speed of CAVs on the path 1 (Fig.\ref{fig:roundabout}) over the control zone length for the five experiments.}
    \label{fig:trajectory1}
\end{figure}

\begin{figure}[ht]
    \centering
    \includegraphics[width=0.99\linewidth]{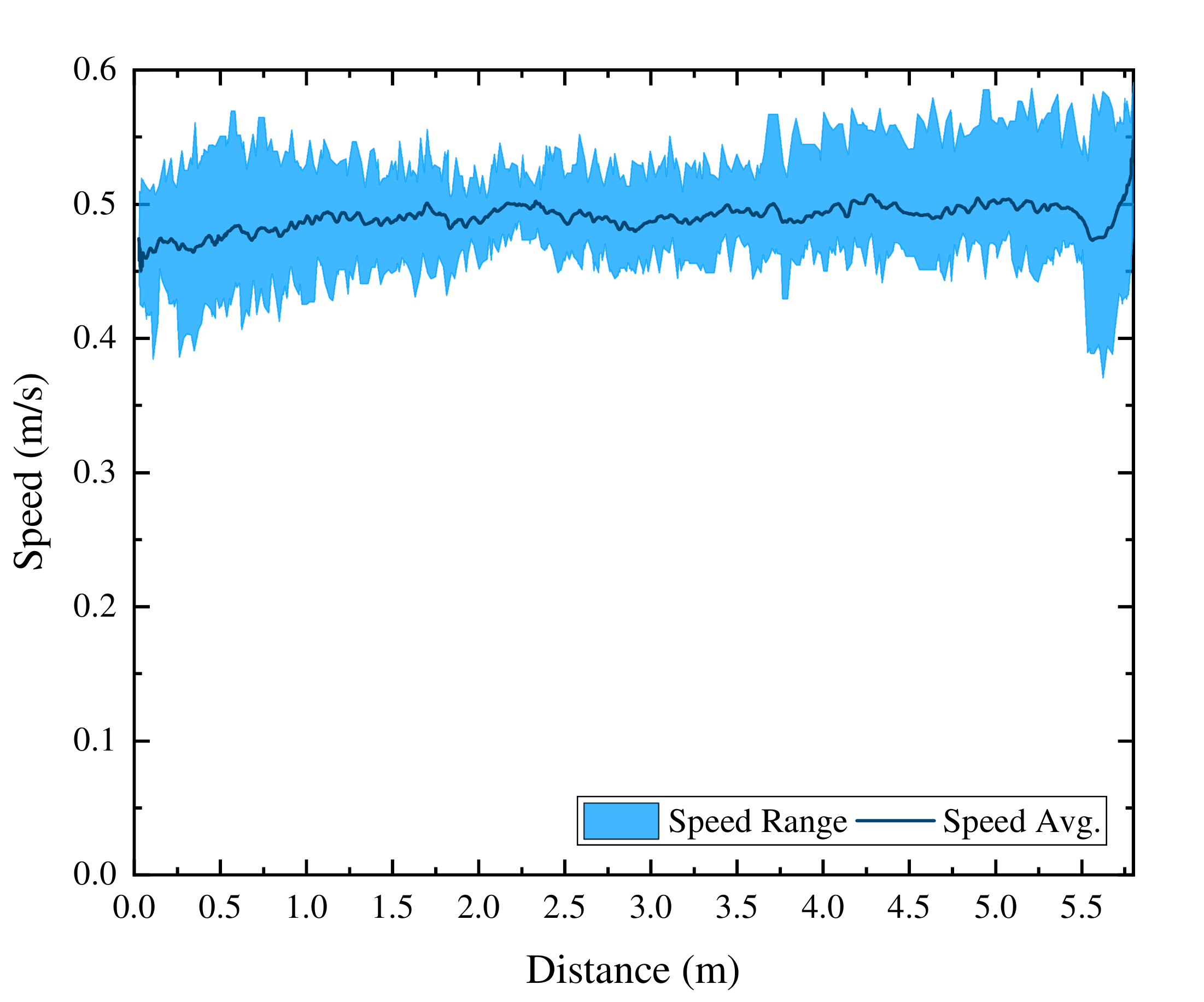}
    \caption{Maximum, minimum, and average speed of CAVs on the path 2 (Fig.\ref{fig:roundabout}) over the control zone length for the five experiments.}
    \label{fig:trajectory2}
\end{figure}
}

\begin{figure*}[ht]
    \centering
   \subfloat[][]{\includegraphics[width=0.33\linewidth]{figures/avgSpeed_eastInner.pdf}\label{a}}
     \subfloat[][]{\includegraphics[width=0.33\linewidth]{figures/avgSpeed_north.pdf}\label{b}}
     \subfloat[][]{\includegraphics[width=0.33\linewidth]{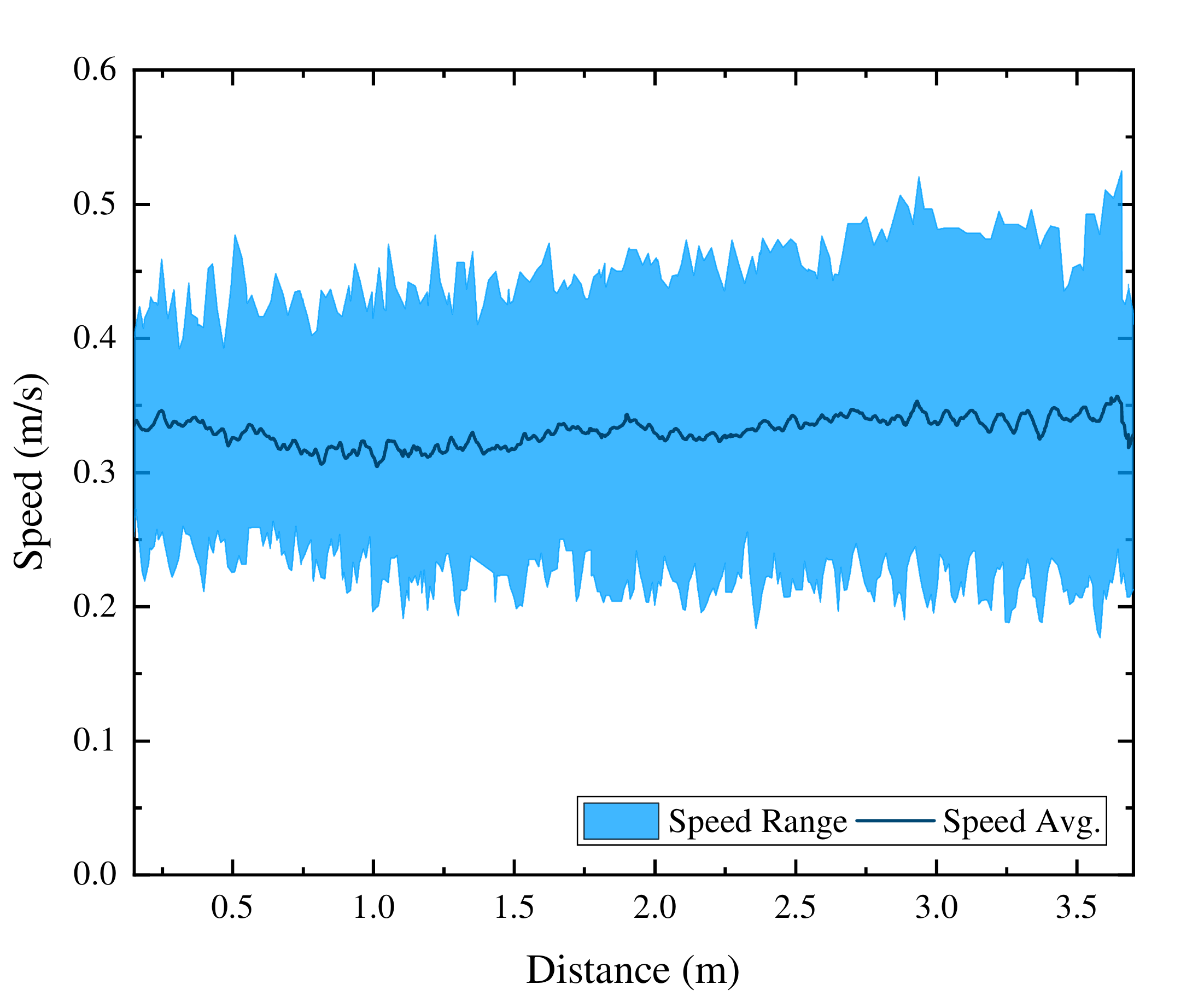}\label{c}}
    \caption{Speed range and average for all CAVs on \protect\subref{a} path $1$, \protect\subref{b} path $2$, and \protect\subref{c} path $3$ across all experiments.}
    \label{fig:trajectoryBlock}
\end{figure*}

\section{Conclusion}

In this paper, we validated experimentally a decentralized optimal control framework, developed earlier \cite{Malikopoulos2019b, Malikopoulos2020}, in a traffic scenario that included coordination of CAVs in a multi-lane roundabout. We demonstrated that the framework can be implemented in real time when multiple locations for potential lateral collisions exist. In our experiment, we used $9$ CAVs over a series of $5$ experiments and showed that the CAVs can cross the roundabout without stop-and-go driving while avoiding collisions. The next step is to enhance the problem formulation to account for noise, disturbances, communication delays, and low-level tracking error in the CAVs. Exploring the trade-off between the state and control constraints (increasing the feasible space of $t_i^f$) and the safe lateral time headway (decreasing the feasible space of $t_i^f$) is another direction for future research.


\bibliographystyle{IEEEtran}
\bibliography{Bib/main}

%

%

\end{document}